\title{An extension theorem for K\"ahler currents}
\author[T.C. Collins]{Tristan C. Collins}
\address{Department of Mathematics, Columbia University, 2990 Broadway, New York, NY 10027}
\email{tcollins@math.columbia.edu}
\author[V. Tosatti]{Valentino Tosatti$^{*}$}
\thanks{$^{*}$Supported in part by a Sloan Research Fellowship and NSF grants DMS-1236969 and DMS-1308988.}
\address{Department of Mathematics, Northwestern University, 2033 Sheridan Road, Evanston, IL 60201}
\email{tosatti@math.northwestern.edu}
\theoremstyle{plain}
\newtheorem{thm}{Theorem}[section]
\newtheorem{lem}[thm]{Lemma}
\theoremstyle{definition}
\newtheorem{ex}[thm]{Example}
\numberwithin{equation}{section}
\newcommand{\del}{\partial}
\newcommand{\de}{\partial}
\newcommand{\dbar}{\overline{\del}}
\newcommand{\ddb}{i\del\dbar}
\newcommand{\ve}{\varepsilon}
\newcommand{\vp}{\varphi}
\newcommand{\ti}[1]{\tilde{#1}}
\renewcommand{\leq}{\leqslant}
\renewcommand{\geq}{\geqslant}
\renewcommand{\epsilon}{\varepsilon}
\renewcommand{\phi}{\varphi}
\newcommand{\ov}[1]{\overline{#1}}
\begin{document}
\begin{abstract}
We prove an extension theorem for K\"ahler currents with analytic singularities in a K\"ahler class on a complex submanifold of a compact K\"ahler manifold.
\end{abstract}
\maketitle
\section{Introduction}
The problem that we consider in this paper is the following: given a compact K\"ahler manifold $(X,\omega)$, a compact complex submanifold $V\subset X$, and a closed positive current $T$ on $V$ in the class $[\omega|_V]$, can we find a closed positive current $\ti{T}$ on $X$ in the class $[\omega]$ with $T=\ti{T}|_V$? Extension questions like this have recently generated a great deal of interest thanks to their analytic and geometric applications \cite{CGZ, His, OV, Sch, Wu}.
The first result in this direction is due to Schumacher \cite{Sch} who proved that if $[\omega]$ is rational (hence $X$ is projective), then any smooth K\"ahler metric on $V$ in the class $[\omega|_V]$ extends to a smooth K\"ahler metric on $X$ in the class $[\omega]$ (see also \cite{CGZ, OV, Sa, Wu}). More recently,  Coman-Guedj-Zeriahi proved in \cite{CGZ} that under the same rationality assumption, every closed positive current $T$ on a closed analytic subvariety $V\subset X$ in the class $[\omega|_V]$ extends to $X$.

In our main theorem we get rid of rationality/projectivity assumptions in the case of extension of K\"ahler currents with analytic singularities from a submanifold. More precisely, we prove:

\begin{thm}\label{extend}
Let $(X,\omega)$ be a compact K\"ahler manifold and let $V \subset X$ be a positive-dimensional compact complex submanifold.  Let $T$ be a K\"ahler current with analytic singularities on $V$ in the K\"ahler class $[\omega|_{V}]$.  Then there exists a K\"ahler current $\ti{T}$ on $X$ in the class  $[\omega]$ with $T = \ti{T}|_{V}$.
\end{thm}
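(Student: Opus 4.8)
The plan is to first extend $T$ to a K\"ahler current on a neighborhood of $V$ and then spread that extension out to all of $X$. Write $T=\omega|_V+\ddb\vp$ with $\vp$ a quasi-plurisubharmonic function on $V$ satisfying $\omega|_V+\ddb\vp\geq0$, and let $Z\subset V$ be the analytic set along which $\vp$ has its analytic singularities; since $T$ lies in the K\"ahler class $[\omega|_V]$ we have $\vp\not\equiv-\infty$, so $Z$ is a proper analytic subvariety of $V$ and in particular has codimension $\geq2$ in $X$. Fix $\ve_0>0$ with $T\geq\ve_0\,\omega|_V$.

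\emph{Step 1 (extension near $V$).} Over $V\setminus Z$, where $\vp$ is smooth, this is easy: extend $\vp$ smoothly to a tubular neighborhood of $V$ and add a large multiple of a smooth nonnegative function $\rho_V$ that vanishes to second order along $V$ and whose complex Hessian is positive along the normal directions of $V$; a Schur-complement computation then gives strict positivity near $V\setminus Z$ while keeping the restriction to $V$ equal to $\vp$. The crux is to do this across $Z$. Near a point of $Z$, choose coordinates $(z,w)$ on a ball with $V=\{w=0\}$ and write $\vp=\frac c2\log\sum_j|g_j(z)|^2+h(z)$ with $h$ smooth and the $g_j$ local generators of the singular ideal. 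I would extend $\vp$ by
$$\Phi:=\frac c2\log\sum_j|g_j(z)|^2+\widehat h(z,w)+C'\,\rho_V(z,w),$$
where $\widehat h$ is a smooth extension of $h$ and $C'\gg1$. The key point is that the singular term is extended \emph{independently of the normal variable $w$}, so that its complex Hessian in the $z$-directions is frozen at its value along $V$ and hence continues to absorb whatever negativity $\omega|_V+\ddb h$ carries as one approaches $Z$; the term $C'\rho_V$ restores positivity in the normal directions. For $C'$ large and the ball small, a second Schur estimate gives $\omega+\ddb\Phi\geq\frac{\ve_0}{2}\,\omega$, while $\Phi|_V=\vp$.

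\emph{Step 2 (globalization).} The local models built over a finite cover of $V$ all restrict to $\vp$ on $V$, and on overlaps they differ by bounded functions: the relevant ideals agree on $V$, so the logarithmic terms are mutually comparable by the Nullstellensatz, while the smooth parts differ smoothly. A standard regularized-maximum patching then produces a single function $\Phi$ on a neighborhood $U$ of $V$ with $\Phi|_V=\vp$ and $\omega+\ddb\Phi\geq\ve_1\,\omega$ on $U$. Now take a quasi-plurisubharmonic function $\rho$ on $X$ with logarithmic poles exactly along $V$; after rescaling by a small positive constant we may assume $\omega+\ddb\rho\geq\delta\,\omega$ for some $\delta>0$. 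For $M$ large (so that $\rho+M>\Phi$ near $\partial U$), the function $\ti{\Phi}:=\max(\Phi,\rho+M)$ on $U$, continued by $\rho+M$ on $X\setminus U$, is globally quasi-plurisubharmonic with $\omega+\ddb\ti{\Phi}\geq\min(\ve_1,\delta)\,\omega$, and since $\rho\equiv-\infty$ on $V$ we get $\ti{\Phi}|_V=\vp$. Then $\ti{T}:=\omega+\ddb\ti{\Phi}$ is a K\"ahler current in $[\omega]$ and $\ti{T}|_V=\omega|_V+\ddb\vp=T$, the restriction being legitimate because $\ti{\Phi}$ has analytic singularities along $Z\subset V$ with $\ti{\Phi}|_V\not\equiv-\infty$.

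The hard part is Step 1 across $Z$: one has to verify that the ansatz above --- in which, crucially, the singular part is \emph{not} modified in the normal directions --- is strictly $\omega$-positive off $V$ in a full neighborhood of $Z$, where $\omega|_V+\ddb h$ need not be positive and is kept in check only by the frozen logarithmic term. Making the constants and the uniform Schur-complement estimate work as one approaches $Z$ is the technical heart of the argument; the analytic-singularities hypothesis enters both there and, through the comparability of the local logarithmic terms, in the patching of Step 2.
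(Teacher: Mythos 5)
Your Step 1 is essentially the paper's local construction (the paper uses $\vp_j(z,z')=\vp(z')+A|z|^2$ in adapted coordinates, i.e.\ exactly your ``freeze the potential in the normal direction and add a convex term'' ansatz), and your final globalization via $\max(\Phi,\rho+M)$ with $\rho$ having log poles along $V$ is also the paper's last step. The genuine gap is in Step 2, and it is precisely the heart of the theorem. Your justification for the patching is the claim that on overlaps the local extensions ``differ by bounded functions'' because the ideals agree on $V$ and are ``mutually comparable by the Nullstellensatz.'' This is false. The two extensions restrict to the same function on $V$, but off $V$ they are built from different ideals in the ambient space, and their difference is unbounded in every neighborhood of a point of the polar set $P=E_+(T)$. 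The paper's own example makes this explicit: extending $\vp=\log|x|^2$ from $V=\{y=0\}\subset\C^2$ by $\vp_1=\log|x-y|^2$ and $\vp_2=\log|x+y|^2$, the difference $\log\frac{|x-y|^2}{|x+y|^2}$ takes all values in $[-\infty,+\infty]$ arbitrarily close to the origin. A Lojasiewicz/Nullstellensatz comparison would at best give $\sum|f_i|^2\le C\bigl(\sum|\ti{f}_i|^2\bigr)^{\alpha}$ for some $\alpha>0$, i.e.\ comparability of the logarithms up to a multiplicative constant, not boundedness of their difference.

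Consequently the regularized-maximum gluing breaks down exactly where it matters: to pass from chart $W_1$ to chart $W_2$ one needs $\ti{\vp}_1<\ti{\vp}_2$ on the transition region near $\partial U_1$, which follows from continuity plus a small cutoff subtraction away from $P$, but fails (or is meaningless, both sides being $-\infty$ or their difference oscillating unboundedly) at points of $P$ in the transition region. The paper's argument acknowledges this by first producing a K\"ahler current only on a \emph{pinched} neighborhood that omits such points, and then devotes its main technical effort (Lemmas 2.2--2.4, proved via a log resolution of the singularities of the local potentials) to showing that near a pinch point $p$ one can insert $\nu F$ --- a small multiple of a global potential with log poles along $V$ --- so that $\{\nu F\ge\ti{\vp}_2\}\cup\{\ti{\vp}_2>\ti{\vp}_1\}$ covers a full neighborhood of $p$; the key point (Lemma 2.4) is that every divisor of the resolution on which $F$ is singular is one on which $\ti{\vp}_2$ is also singular, so a sufficiently small $\nu$ works. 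Nothing in your proposal substitutes for this step, so as written the proof does not go through; you have identified the wrong step (``Step 1 across $Z$'') as the technical heart.
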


The techniques that have been used in the past to approach this type of extension problems range from Siu's Stein neighborhood theorem \cite{Si}, to results of Coltoiu \cite{Co} on extending Runge subsets of analytic subsets of $\mathbb{C}^{N}$, to the Ohsawa-Takegoshi $L^2$ extension theorem \cite{OT}. In this paper we introduce a new constructive extension technique which uses resolution of singularities to obtain estimates which allow us to glue plurisubharmonic functions with analytic singularities near their polar set by a modification of a classical argument of Richberg \cite{Rich}. The ideas of using resolution of singularities comes from the recent work of Collins-Greenleaf-Pramanik \cite{CGP} on sharp estimates for singular integral operators, which was motivated by the seminal work of Phong, Stein and Sturm \cite{PSS, PS, PS2}. One advantage of these local techniques is that we can work on general K\"ahler manifolds with arbitrary K\"ahler classes, although at present we can only extend K\"ahler currents with analytic singularities from smooth subvarieties.  It would be very interesting to know how far these techniques can be pushed, but we have not undertaken this here.

In our recent work \cite{CT}, we dealt with a very similar extension problem, and employed a related argument of a more global flavor where we used resolution of singularities from the very beginning and worked on a blowup of $X$. This approach is technically simpler, because it allows us to use directly Richberg's gluing technique, but it seems not to be strong enough to prove Theorem \ref{extend}, because the extension is achieved only on a blowup of $X$.
On the other hand, the proof of Theorem \ref{extend} can easily be adapted to give another proof of \cite[Theorem 3.2]{CT}, which is the key technical result needed in that paper.\\

{\bf Acknowledgment. }We are grateful to the referee for some useful corrections and suggestions. The second named author was supported in part by a Sloan Research Fellowship and NSF grants DMS-1236969 and DMS-1308988.

\section{Proof of Theorem \ref{extend}}\label{sectproof}

Let us first recall the necessary definitions and concepts needed for the proof of Theorem \ref{extend}. Let $(X,\omega)$ be a compact K\"ahler manifold. We say that a closed positive $(1,1)$ current $T$ on $X$ is a K\"ahler current if it satisfies $T\geq\ve\omega$ as currents, for some $\ve>0$.

Every closed positive $(1,1)$ current $T$ cohomologous to $\omega$ is of the form $T=\omega+\ddb\vp$ for some quasi-plurisubharmonic function $\vp$.  We say that $\vp$ (or $T$) has analytic singularities if for any given $x\in X$ there is a neighborhood $U$ of $x$ with holomorphic functions $f_1,\dots,f_N$ and a smooth function $g$ such that on $U$ we have
$$\vp=c\log\left(\sum_i |f_i|^2\right)+g,$$
for some $c>0$. In this case the polar set $\{\vp=-\infty\}$ is a closed analytic subvariety of $X$. A basic result of Demailly \cite{Dem92} implies that every K\"ahler current can be approximated (in the weak topology) by K\"ahler currents with analytic singularities in the same cohomology class.

We now turn to the proof of Theorem \ref{extend}. The proof is somewhat technical, so we briefly outline the main steps.  First,
we construct local extensions on a finite open covering of $V$.  We then use an idea of Richberg \cite{Rich} to
glue the local potentials to construct a K\"ahler current in a \emph{pinched} tubular neighborhood in $X$.  That is,
near any point where $T$ has positive Lelong number, the diameter of the tubular neighborhood might
pinch to zero. By adding a suitable potential to the K\"ahler metric, we also obtain a global K\"ahler current $R$ on $X$
with analytic singularities along $V$. We then use a resolution of singularities argument to show that, after possibly reducing the Lelong numbers
of the global K\"ahler current $R$, we can glue the K\"ahler current on the pinched neighborhood to $R$,
to obtain a global K\"ahler current.

\begin{proof}[Proof of Theorem \ref{extend}]
By assumption, we can write $T=\omega|_V+\ddb\phi$ for some quasi-plurisubharmonic function $\vp$ on $V$ which has analytic singularities on $V$. Our goal is to extend $\vp$ to
a function $\Phi$ on $X$ with $\omega+\ddb\Phi$ a K\"ahler current.

Thanks to \cite[Lemma 2.1]{DP}, there exists a function $\psi:X \rightarrow [-\infty, \infty)$ which is smooth on $X\backslash V$,
with analytic singularities along $V$, and with $\ddb\psi\geq -A\omega$ as currents on $X$, for some large $A$. Then, for $\delta$ sufficiently small, we have that
$R = \omega+\delta\ddb \psi$ is a K\"ahler current on $X$ with analytic singularities along $V$. Fix one such $\delta$ and define $F=\delta\psi$.

Choose $\ve>0$ small enough so that $$T= \omega|_{V} +\ddb\phi \geq 3\epsilon \omega|_{V},$$ holds as currents on $V$.
We can cover $V$ by finitely many charts $\{W_j\}_{1\leq j\leq N}$ such that on each $W_j$ there are local coordinates
$(z_1,\dots,z_n)$ and so that $V\cap W_j=\{z_1=\dots=z_{n-k}=0\}$, where $k=\dim V$.
Write $z=(z_1,\dots,z_{n-k})$ and $z'=(z_{n-k+1},\dots,z_n)$
and define a function $\vp_j$ on $W_j$ (with analytic singularities) by
$$\vp_j(z,z')=\vp(z')+A|z|^2,$$
where $A>0$ is a constant. If we shrink the $W_j$'s slightly, still preserving the property that $V\subset\cup_j W_j$, we can choose $A$ sufficiently large so that
$$\omega+\ddb\vp_j\geq 2\ve\omega,$$
holds on $W_j$ for all $j$. It will also be useful to fix slightly smaller open sets $W_j'\Subset U_j\Subset W_j$ such that $\cup_j W_j'$ still covers $V$.
Note that since $\vp$ is smooth at the generic point of $V$, by construction all functions $\vp_j$ are also smooth in a neighborhood of the generic point of $V\cap W_j$.

We wish to glue the functions $\vp_j$ together to produce a K\"ahler current defined in a neighborhood of $V$ in $X$.
This would be straightforward if the functions $\vp_j$ were continuous, thanks to a procedure of Richberg \cite{Rich},
but in our case the functions $\vp_j|_V$ have poles along
$$P=E_+(T)=\left(\bigcup_j\{\vp_j=-\infty\}\right)\cap V.$$
We can still use the technique of Richberg to produce a K\"ahler current in an open neighborhood in $X$ which does not contain the polar set $P$.
An argument using resolution of singularities will then allow us to get a K\"ahler current in a whole neighborhood of $V$.

The first step is to consider two open sets $W_1, W_2$ in the covering with $W_1'\cap W_2'\cap V$ nonempty,
and fix a compact set $K\subset V$ with $(W_1'\cup W_2')\cap V\subset K\subset (U_1\cup U_2)\cap V$.
Let $M_1=K\cap\de U_2, M_2=K\cap\de U_1,$ so that $M_1$ and $M_2$ are disjoint compact subsets of $V$.  This setup is depicted in figure~\ref{fig: 3}.
%\begin{comment}
 \begin{figure}[h]
\begin{center}
\psset{unit=.007in}
\begin{pspicture}(-250,-260)(250,50)
	\newgray{llgray}{.60}
	\pscurve(-250,-120)(-60, -80)(100,-120)(250,-120)(270,-118)
	\pscurve[linecolor=llgray ,linewidth=4pt](-170,-97)(-60,-80)(100,-120)(211,-122)
	\pscurve(-250,-120)(-60, -80)(100,-120)(250,-120)
	\pscurve[linewidth=.75, arrowsize=3pt]{c->}(-220,-180)(-110,-140)(-100,-90)
	\pscurve[linewidth=.75]{c->}(35,-70)(40,-95)(50,-90)(54,-105)
	\pscurve[linewidth=.75]{c->}(15,-120)(0,-100)(-5,-105)(-12,-90)
	\psellipse(-60,-80)(120,120)
	\psellipse(100,-120)(120,120)
	\psset{linestyle=dashed,dash=3pt 2pt}
	\psellipse(-60,-80)(148,148)
	\psellipse(100,-120)(148,148)
	\uput[100](-70,-50){$U_{1}$}
	\uput[0](110,-60){$U_{2}$}
	\uput[0](270,-118){$V$}
	\uput[4](-100,50){$W_{1}$}
	\uput[5](110,5){$W_{2}$}
	\uput[180](-220,-180){$K$}
	\uput[120](45,-80){$M_{2}$}
	\uput[270](20,-110){$M_{1}$}
	\psdots(-15,-87)(55,-110)
\end{pspicture}
\caption{The setup for the local Richberg-type argument.   }
\label{fig: 3}
\end{center}
\end{figure}
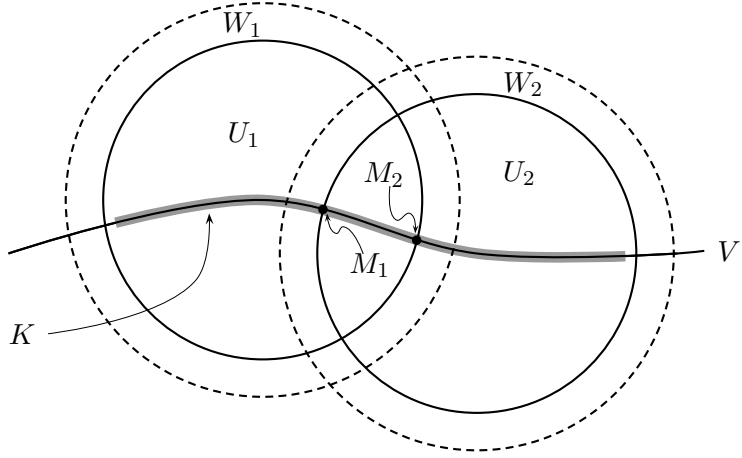
%\end{comment}

Pick $\theta_1$ a smooth nonnegative cutoff function which is identically $1$ in a neighborhood of $M_2$ in $X$ and
$\theta_2$ a smooth nonnegative cutoff function which is identically $1$ in a neighborhood of $M_1$ in $X$ so that
the supports of $\theta_1$ and $\theta_2$ are disjoint.
Then, if we choose $\eta>0$ small, the functions
\begin{equation}\label{tilda}
\ti{\vp}_j=\vp_j-\eta\theta_j,
\end{equation}
$j=1,2$ have analytic singularities and satisfy $\omega+\ddb \ti{\vp}_j\geq \ve\omega$ on $W_j$.
Furthermore they are smooth in a neighborhood of the generic point of $V\cap W_j$.
On $W_1\cap W_2$ we then define
$$\ti{\vp}_0=\max\{\ti{\vp}_1,\ti{\vp}_2\},$$
which satisfies $\omega+\ddb \ti{\vp}_0\geq \ve\omega$ and equals $\vp$ on $V\cap W_1\cap W_2$.
Consider now a neighborhood of $M_2\backslash P$ in $X$, small enough so that $\theta_1=1$ and $\vp_1,\vp_2$ are finite on it. Since $\vp_1,\vp_2$ agree on $V$ and
are smooth on this neighborhood, we see that there exists a possibly smaller such neighborhood where
\begin{equation*}
\tilde{\phi}_{1} = \phi_{1} - \eta < \phi_{2} = \tilde{\phi}_{2},
\end{equation*}
so that $\ti{\vp}_0=\ti{\vp}_2$ there. Similarly, on any sufficiently small neighborhood of $M_1\backslash P$ we have $\ti{\vp}_0=\ti{\vp}_1$.
Therefore there is an open neighborhood $W_0$ of $(K\backslash ((M_1\cap P)\cup(M_2\cap P)) )\cap \ov{U_1}\cap \ov{U_2}$ in $X$ such that
$\ti{\vp}_0=\ti{\vp}_1$ on $W_0\backslash \ov{U_2}$ and $\ti{\vp}_0=\ti{\vp}_2$ on $W_0\backslash \ov{U_1}$.
Therefore we can define
$$W'=W_0\cup (U_1\backslash \ov{U_2})\cup (U_2\backslash \ov{U_1}),$$
which is a neighborhood of $K\backslash ((M_1\cap P)\cup(M_2\cap P))$ in $X$,
and define a function
$\vp'$ on $W'$ to be equal to $\ti{\vp}_0$ on $W_0$, equal to $\ti{\vp}_1$ on $U_1\backslash \ov{U_2}$ and equal to $\ti{\vp}_2$ on $U_2\backslash \ov{U_1}$.
Then $\vp'$ satisfies $\omega+\ddb\vp'\geq \ve\omega$ and equals $\vp$ on $W'\cap V$. Clearly, $W'$ contains
$(W_1'\cup W_2')\cap (V\backslash P)$. We refer to $W'$ as a pinched neighborhood of $K$, since in general it is not a neighborhood of the whole of $K$ and it might pinch off at points in $M_1\cap P$ and $M_2\cap P$. We will later need to decrease the value of $\eta$, which might change the set $W'$ slightly, but it will still remain a pinched neighborhood of $K$.

We now deal with points in $M_1\cap P$ and $M_2\cap P$. By symmetry, it suffices to consider a point $p\in M_2\cap P$.
Recall that $R = \omega +\ddb F$ is a K\"ahler current on $X$ with
analytic singularities exactly along $V$.  Choose a small coordinate neighborhood $U \subset X$ centered at $p$, small enough so that
$\theta_1=1$ and $\theta_2=0$ on $U$. In particular at points in $W'$ sufficiently near $p$ we have $\vp'=\ti{\vp}_0=\ti{\vp}_2=\vp_2$.
Since $\tilde{\phi}_{1}, \tilde{\phi}_{2}, F$ have
analytic singularities, they can be expressed as (recall that $\vp_1=\vp_2$ on $V\cap U$)
\begin{equation}\label{eq: expansion}
\begin{aligned}
\tilde{\phi}_{1} &= \delta_{1}\log(\sum_{i} |f_{i}|^{2}) + \sigma_{1} - \eta = \phi_{1}-\eta,\\
\tilde{\phi}_{2} & = \delta_{1}\log(\sum_{i} |\tilde{f}_{i}|^{2}) + \sigma_{2} = \phi_{2},\\
F&= \delta_{3}\log(\sum_{j} |g_{j}|^{2}) + \sigma_{3},
\end{aligned}
\end{equation}
near $p$, where $\sigma_{k}$, $k=1,2,3$ are local smooth functions, and $f_i, \ti{f}_i, g_j$ are local holomorphic functions, with the functions $g_j$ locally defining $V$. Moreover,
when restricted to $V$, we have
\begin{equation*}
\delta_{1}\log(\sum_{i} |f_{i}|^{2}) + \sigma_{1} = \delta_{1}\log(\sum_{i} |\tilde{f}_{i}|^{2}) + \sigma_{2}
\end{equation*}
since $\phi_{1}, \phi_{2}$ both extend $\phi$.  Then by the above argument,
the function $\vp'$ is defined at least on the set $\mathcal{S}\subset U$ given by
\begin{equation*}
\mathcal{S} = \left\{\delta_{1}\log(\sum_{i} |\tilde{f}_{i}|^{2})
+ \sigma_{2}> \delta_{1}\log(\sum_{i} |f_{i}|^{2}) + \sigma_{1} - \eta\right\}=\{\ti{\vp}_2>\ti{\vp}_1\}\cap U,
\end{equation*}
where the strict inequality in particular requires $\ti{\vp}_2$ to be finite.
The idea is to show that the singularities of $F$ are comparable to those of
$\tilde{\phi}_{2}$ on $U\backslash \mathcal{S}$.  That is, for $0<\nu <1$
 we consider the subset of $U$ given by
\[\begin{split}
E_\nu &= \left\{\nu\delta_{3}\log(\sum_{j} |g_{j}|^{2}) +
 \nu\sigma_{3} \geq  \delta_{1}\log(\sum_{i} |\tilde{f}_{i}|^{2}) + \sigma_{2} \right\}\\
&=\{\nu F\geq\ti{\vp}_2\}\cap U,
\end{split}\]
where we now allow points where both sides of the inequality are $-\infty$. In particular, we always have that
$p\in E_\nu$. We can also subtract a constant to $F$ so that $\sup_X F\leq 0$, and then we see that
the sets $E_\nu$ are decreasing in $\nu$.

\begin{lem}\label{neigh}
There exists $0<\nu_0<1$ such that for any $0<\nu\leq\nu_0$, the set $E_\nu\, \cup \mathcal{S}$ contains an open
 neighborhood of $p$.
\end{lem}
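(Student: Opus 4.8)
The natural approach is a proof by contradiction. Fix $\nu$, which will be chosen small at the end, and suppose $E_\nu\cup\mathcal{S}$ contains no neighborhood of $p$, so that there is a sequence $q_m\to p$ with $q_m\notin E_\nu\cup\mathcal{S}$ for all $m$. First I would unwind the definitions. Since $q_m\notin E_\nu$, and $E_\nu$ contains every point where $\tilde\phi_2=-\infty$, we get $\nu F(q_m)<\tilde\phi_2(q_m)$ with $\tilde\phi_2(q_m)$ finite; since then also $q_m\notin\mathcal{S}$, we get $\tilde\phi_2(q_m)\le\tilde\phi_1(q_m)$. On $U$ we have $\tilde\phi_1=\phi_1-\eta$ and $\tilde\phi_2=\phi_2$, while on $V$ we have $\phi_1=\phi_2$, so the inequality $\phi_2(q_m)\le\phi_1(q_m)-\eta$ is impossible at a point of $V$; hence $q_m\notin V$, and moreover $\phi_1(q_m),\phi_2(q_m),F(q_m)$ are all finite. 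Writing $u=\sum_i|f_i|^2$, $\tilde u=\sum_i|\tilde f_i|^2$, $w=\sum_j|g_j|^2$, this means $u(q_m),\tilde u(q_m),w(q_m)>0$, while $\mathrm{dist}(q_m,V)\to 0$.

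The heart of the matter is to trap $\tilde u(q_m)$ between two powers of $\mathrm{dist}(q_m,V)$. For the lower bound I would invoke the \L ojasiewicz inequality for the real-analytic function $w$, whose zero set near $p$ is precisely $V$: this gives $w\ge c\,\mathrm{dist}(\cdot,V)^M$ near $p$ for some $M$. Feeding this into $\nu F(q_m)<\tilde\phi_2(q_m)$ — using $F=\delta_3\log w+\sigma_3$, $\tilde\phi_2=\delta_1\log\tilde u+\sigma_2$, and the boundedness of $\sigma_2,\sigma_3$ — yields $\tilde u(q_m)\ge c'\,\mathrm{dist}(q_m,V)^{\nu M\delta_3/\delta_1}$. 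For the upper bound I would use the identity $\delta_1\log u+\sigma_1=\delta_1\log\tilde u+\sigma_2$ on $V$, which holds because $\phi_1$ and $\phi_2$ both restrict to $\phi$ (so the identity is valid on the dense set $V\setminus P$, and extends to $V\cap U$ by continuity since $P$ is a proper analytic subvariety of $V$); equivalently $u=c_0\tilde u$ on $V\cap U$, where $c_0:=e^{(\sigma_2-\sigma_1)/\delta_1}$ is smooth and bounded above and below by positive constants near $p$. Since $u-c_0\tilde u$ is smooth and vanishes on the submanifold $V$, we have $u-c_0\tilde u=O(\mathrm{dist}(\cdot,V))$ near $p$. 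On the other hand $q_m\notin\mathcal{S}$ reads $\phi_2(q_m)\le\phi_1(q_m)-\eta$, i.e.\ $u(q_m)\ge c_0(q_m)e^{\eta/\delta_1}\tilde u(q_m)$, whence
\[
c_0(q_m)\bigl(e^{\eta/\delta_1}-1\bigr)\tilde u(q_m)\le u(q_m)-c_0(q_m)\tilde u(q_m)\le C\,\mathrm{dist}(q_m,V),
\]
and, as $\eta>0$ is fixed and $c_0$ is bounded below, $\tilde u(q_m)\le C''\,\mathrm{dist}(q_m,V)$.

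Combining the two bounds gives $c'\,\mathrm{dist}(q_m,V)^{\nu M\delta_3/\delta_1}\le C''\,\mathrm{dist}(q_m,V)$, i.e.\ $c'/C''\le\mathrm{dist}(q_m,V)^{1-\nu M\delta_3/\delta_1}$. Setting $\nu_0:=\min\bigl\{\tfrac{1}{2},\,\tfrac{\delta_1}{2M\delta_3}\bigr\}$, for every $0<\nu\le\nu_0$ the exponent $1-\nu M\delta_3/\delta_1$ is at least $\tfrac{1}{2}$, so the right-hand side tends to $0$ as $m\to\infty$, contradicting $c'/C''>0$; hence for such $\nu$ the set $E_\nu\cup\mathcal{S}$ contains a ball around $p$. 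I expect the one genuinely delicate step to be the upper bound $\tilde u(q_m)\le C''\,\mathrm{dist}(q_m,V)$: this is exactly where one must use that $\phi_1$ and $\phi_2$ have the \emph{same} logarithmic part along $V$, so that the region $\{\phi_1-\phi_2\ge\eta\}$ is forced to lie within $O(\mathrm{dist}(\cdot,V))$ of $V$; once the \L ojasiewicz lower bound on $w$ is available, the remaining estimates and the bookkeeping of constants are routine.
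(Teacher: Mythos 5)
Your proof is correct, but it takes a genuinely different route from the paper. The paper passes to a log resolution $\pi:\tilde U\to U$ of the ideal sheaf of $G=(V\cup E_+(\omega+\ddb\vp_1)\cup E_+(\omega+\ddb\vp_2))\cap U$, writes $\pi^*\exp(\vp_1),\pi^*\exp(\vp_2),\pi^*\exp(F)$ in monomial--times--unit form, and proves two statements: $\pi^{-1}(\mathcal S)$ contains a neighborhood of the strict transform $\tilde V$ (Lemma \ref{lem: S nbhd}), and along every other exceptional divisor meeting $\overline{\pi^{-1}(\mathcal S^c)}$ the vanishing order $\gamma_i$ of $\exp(F)$ is dominated by a positive order $\beta_i$ of $\exp(\vp_2)$ (Lemma \ref{crux}), so that $\beta_i>\nu_0\gamma_i$ for small $\nu_0$. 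Your two estimates are precisely the downstairs shadows of these: the upper bound $\tilde u\le C''\,\mathrm{dist}(\cdot,V)$ on $\mathcal S^c$ (from $u=c_0\tilde u$ on $V$ plus the $C^1$ bound on $u-c_0\tilde u$) is the quantitative content of Lemma \ref{crux}, and the \L ojasiewicz lower bound $w\ge c\,\mathrm{dist}(\cdot,V)^M$ (which here is immediate with $M=2$, since the $g_j$ generate the ideal of the smooth submanifold $V$) converts it into the comparison $\nu F\ge\tilde\vp_2$ off $\mathcal S$. Your argument is shorter, avoids resolution entirely, and transparently yields a $\nu_0$ depending only on $M,\delta_1,\delta_3$ and not on $\eta$ --- a point the paper needs and has to remark on separately. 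What the paper's heavier setup buys is reusability: in the subsequent gluing steps the local potentials are no longer functions with analytic singularities but finite maxima of such functions together with terms $\nu_a F+\rho_a$, and the paper recycles Lemmas \ref{lem: S nbhd} and \ref{crux} on the resolution almost verbatim for each term of the maximum; with your approach one would have to rerun the distance estimates for each $\ti{\vp}_{j_\ell}$ versus each $\ti{\vp}_{i_k}$ and for $F$ versus each $\ti{\vp}_{j_m}$ separately, which works but is exactly the bookkeeping the paper's modular lemmas are designed to package.
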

We refer the reader to figure~\ref{fig: 1} for the geometry of this local gluing problem.
%\begin{comment}
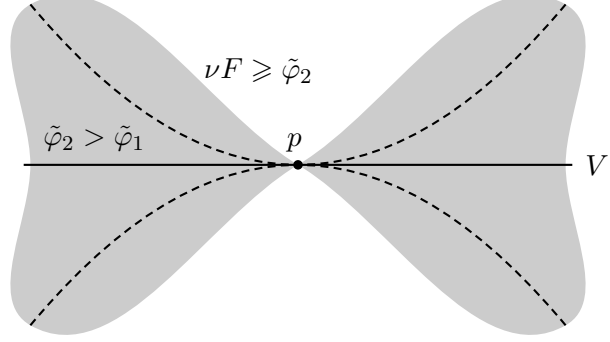
\begin{figure}[h]
\begin{center}
\psset{unit=.007in}
\begin{pspicture}(-250,-260)(250,50)
	\newgray{llgray}{.80}
	\psset{linecolor=llgray}
	\psccurve*(0,-120)(200, 0)(200,-120)(200,-240)(0,-120)(-200, 0)(-200,-120)(-200,-240)
	\psset{linecolor=black}
	\psline(-205,-120)(205,-120)
	\psdot(0,-120)
	\uput[100](0,-120){$p$}
	\uput[0](205,-120){$V$}
	\uput[0](-80,-50){$\nu F \geq \tilde{\phi}_{2}$}
	\uput[0](-200, -100){$\tilde{\phi}_{2}> \tilde{\phi}_{1}$}
	\psset{linestyle=dashed,dash=3pt 2pt}
	\parabola(-200, 0)(0,-120)
	\parabola(-200, -240)(0,-120)
\end{pspicture}
\caption{The geometry of the local gluing problem near a pinched point.  The shaded area corresponds
 to the set $\mathcal{S}$, while the set $E_\nu$ corresponds to the area above
  the upper dashed line, and below the lower dashed line}
  \label{fig: 1}
\end{center}
\end{figure}
%\end{comment}

The proof of Lemma \ref{neigh} requires several additional lemmas.
The main technique we use is Hironaka's resolution of singularities.
Define an analytic set $G \subset U$
by $$G=\left(V\cup E_+(\omega+\ddb\vp_1)\cup E_+(\omega+\ddb\vp_2)\right)\cap U,$$
and let
$\mathcal{I}_G$ be its defining ideal sheaf.
By shrinking $U$ if necessary, we may assume that every irreducible component of $G$ passes through $p$.
Let $\pi:\tilde{U} \rightarrow U$ be a log resolution of $\mathcal{I}_G$ obtained by blowing up smooth centers. In order to simplify the notation, we assume that we first blow up $V$, to obtain a divisor $D$, and then resolve the strict transform of $G$.  After resolving, we have that
\begin{equation*}
\pi^{-1}(G) = \tilde{V} + \sum_{\ell}D_{\ell}
\end{equation*}
is a sum of smooth divisors with simple normal crossings and $\tilde{V}$ is the irreducible divisor containing $\pi^{-1}(v)$ for a generic point $v \in V$ ($\tilde{V}$ is the strict transform of $D$). If we can show that  $\pi^{-1}(E_\nu \cup \mathcal{S})$ contains an open neighborhood of $\tilde{V} + \sum_{\ell}D_{\ell}$, then it would follow immediately that
$E_\nu\, \cup \mathcal{S}$ contains an open neighborhood of $p$.

\begin{lem}\label{lem: S nbhd}
The set $\pi^{-1}(\mathcal{S})$ contains an open neighborhood of $ \tilde{V}$.
\end{lem}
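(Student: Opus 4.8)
The plan is to work on the log resolution $\pi: \ti U \to U$ and analyze the pullbacks of $\ti\vp_1, \ti\vp_2, F$ near a generic point of $\ti V$. Recall that $\mathcal S = \{\ti\vp_2 > \ti\vp_1\} \cap U$, and that on $V \cap U$ we have the equality $\ti\vp_2 = \ti\vp_1 + \eta$ (since $\vp_1 = \vp_2 = \vp$ on $V$ and $\theta_1 = 1$, $\theta_2 = 0$ near $p$). So along $V$ the function $\ti\vp_2$ exceeds $\ti\vp_1$ by the fixed positive amount $\eta$. Morally, $\ti V \subset \pi^{-1}(\mathcal S)$ at the level of where these functions are finite; the subtlety is precisely the locus where both $\ti\vp_1$ and $\ti\vp_2$ become $-\infty$, i.e.\ where $E_+(\omega+\ddb\vp_1) \cup E_+(\omega+\ddb\vp_2)$ meets $V$.

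First I would examine a generic point $q \in \ti V$, away from the other exceptional divisors $D_\ell$ and away from the strict transforms of $E_+(\omega+\ddb\vp_1), E_+(\omega+\ddb\vp_2)$. Near such $q$, after pullback, $\pi^*\ti\vp_1$ and $\pi^*\ti\vp_2$ are the sums of smooth functions plus $\delta_1$ times logarithms of $\prod |z_\ell|^{2a_\ell}$ for the local equations of the divisors. Because the divisorial (log-pole) parts of $\pi^*\vp_1$ and $\pi^*\vp_2$ are determined by the same analytic data up to the smooth correction, and because the resolution is simultaneous for both $E_+(\omega + \ddb \vp_i)$, the \emph{orders of vanishing along each component} $D_\ell$ must agree for $\pi^* \vp_1$ and $\pi^* \vp_2$ — this uses that on $V$ they differ only by a smooth function. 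Along $\ti V$ itself neither has a pole (the $g_j$ defining $V$ do not appear in $\ti\vp_1, \ti\vp_2$). Hence near a generic point of $\ti V$, the difference $\pi^*\ti\vp_2 - \pi^*\ti\vp_1 = \pi^*(\vp_2 - \vp_1) + \eta$, and $\pi^*(\vp_2 - \vp_1)$ extends to a \emph{bounded} (indeed continuous) function near $\ti V$ that vanishes on $\ti V$ — because $\vp_2 - \vp_1$ is a difference of functions with the same divisorial singularities, so the logarithmic parts cancel and what remains is smooth. Thus $\pi^*\ti\vp_2 - \pi^*\ti\vp_1 > 0$ in a full neighborhood of the generic locus of $\ti V$, which puts that neighborhood inside $\pi^{-1}(\mathcal S)$.

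The remaining issue is to handle \emph{all} of $\ti V$, including its intersections with the $D_\ell$, using that $\ti V + \sum_\ell D_\ell$ is simple normal crossings. Here I would argue componentwise in local SNC coordinates on $\ti U$: if $\ti V = \{w_1 = 0\}$ and the divisors meeting it are $\{w_2 = 0\}, \dots$, then $\pi^*\ti\vp_1$ and $\pi^*\ti\vp_2$ have the form (smooth) $+\,\delta_1 \log(\text{unit}\cdot\prod_{\ell \ge 2}|w_\ell|^{2 a_\ell})$ with the \emph{same} exponents $a_\ell$ (by the generic-point analysis, since these exponents are the vanishing orders along the $D_\ell$, which are global data), so their difference again extends continuously across $\ti V$, equals $\eta > 0$ on $\ti V$, and hence is positive nearby. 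The key point making this work is that $\ti V$ is not among the divisors where $\ti\vp_1, \ti\vp_2$ blow up, so restricting to $\ti V$ is harmless.

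\textbf{Main obstacle.} The crux is establishing that the logarithmic (divisorial) parts of $\pi^*\vp_1$ and $\pi^*\vp_2$ genuinely cancel in the difference, so that $\pi^*(\vp_2 - \vp_1)$ extends continuously (not merely as some quasi-psh function) across $\ti V$. This requires knowing that the resolution $\pi$ was chosen to resolve $E_+(\omega+\ddb\vp_1)$ and $E_+(\omega+\ddb\vp_2)$ \emph{together with} $V$ — which is exactly how $G$ was defined — and then extracting the matching vanishing orders from the fact that $\vp_1 = \vp_2$ on $V$ to all infinitesimal orders transverse to $V$ is \emph{not} assumed, so one must be careful: the equality is only \emph{on} $V$, and one needs that the singularity \emph{type} of each $\vp_i$ (the analytic set cut out and the coefficient $\delta_1$) restricted near $V$ is governed by data that pulls back compatibly. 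I expect the honest version of this step to require a short lemma: the strict transforms under $\pi$ of $E_+(\omega+\ddb\vp_1)\cap V$ and $E_+(\omega+\ddb\vp_2)\cap V$, viewed inside $\ti V$, coincide, because $\ddb\vp_1 = \ddb\vp_2$ on $V$ forces $E_+(\omega + \ddb\vp_1)\cap V = E_+(\omega+\ddb\vp_2)\cap V$ with equal Lelong numbers, and then the SNC structure propagates this equality to a neighborhood in $\ti U$.
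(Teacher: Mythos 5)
Your proposal is correct and takes essentially the same route as the paper's proof: pass to the log resolution, show the exponents of $\pi^{*}\exp(\vp_1)$ and $\pi^{*}\exp(\vp_2)$ along the divisors meeting $\tilde{V}$ coincide and the smooth factors agree on $\tilde{V}$, and then conclude from the strict gap $\eta>0$. The ``main obstacle'' you flag is closed in the paper more directly than via Lelong numbers: writing $\pi^{*}\exp(\vp_j)=U_j\prod_{i\geq 2}|w_i|^{2\alpha_i}$ with $U_j$ a positive unit and $\tilde{V}=\{w_1=0\}$ transverse to each $\{w_i=0\}$, one simply restricts this expression to $\tilde{V}$, where $\pi^{*}\exp(\vp_1)=\pi^{*}\exp(\vp_2)$ because $\vp_1|_V=\vp_2|_V$, and reads off $\alpha_i=\beta_i$ and $U_1|_{\tilde{V}}=U_2|_{\tilde{V}}$ by comparing vanishing orders along the nonempty hypersurfaces $\{w_i=0\}\cap\tilde{V}$ of $\tilde{V}$.
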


\begin{proof}
Pick a point $q \in  \tilde{V}$.  Since $\pi$
 is a log resolution, there exists an open set $Z\subset\ti{U}$ with a coordinate
 system $(w_{1},\dots, w_{n})$ centered at $q$ such that $\tilde{V} = \{w_{1}=0\}$ and
  $\pi^{*}\exp(\phi_{1}) , \pi^{*}\exp(\phi_{2})$ are of the form
 \begin{equation*}
\begin{aligned}
\pi^{*}\exp(\phi_{1}) &=U_{1}(w_{1},\dots,w_{n}) \prod_{i=2}^{n} |w_{i}|^{2\alpha_{i}}\\
\pi^{*}\exp(\phi_{2}) &= U_{2}(w_{1},\dots,w_{n})\prod_{i=2}^{n} |w_{i}|^{2\beta_{i}},
\end{aligned}
\end{equation*}
 where $U_{j}$ are smooth, positive functions on $\overline{Z}$, and $\alpha_i, \beta_i$ are nonnegative real numbers.  That $w_{1}$
 does not appear in the product follows from the fact that $\phi_{1}, \phi_{2} \not\equiv -\infty$ on $V\cap U$.
By definition, we have
\begin{equation*}
\pi^{-1}(\mathcal{S})\cap Z = \left\{ w \in Z \bigg| U_{1}(w)\prod_{i \geq 2} |w_{i}|^{2\alpha_{i}}
< e^{\eta} U_{2}(w)\prod_{i\geq 2} |w_{i}|^{2\beta_{i}}\right\}.
\end{equation*}
Now, since $\phi_{1}|_{V} = \phi_{2}|_{V}$, and $w_{i}|_{\ti{V}} \ne 0$ for $i\geq 2$, we clearly have
 that $ \alpha_{i} = \beta_{i}$, and that
 $U_{1}|_{\tilde{V}} = U_{2}|_{\tilde{V}}$.  Since $e^{\eta}>1$, the lemma is proved.
\end{proof}

By Lemma \ref{lem: S nbhd}, it suffices to work on a compact set away
from $\tilde{V}$.  Fix a point $q \in D_{\ell} \cap \overline{\pi^{-1}(\mathcal{S}^{c})}$
and an open set $Z\subset\ti{U}$ disjoint from $\tilde{V}$,  with a coordinate
system $(w_{1},\dots, w_{n})$ centered at $q$ so that
\begin{equation}\label{res form}
\begin{aligned}
\pi^{*}\exp(F) &=U_{F}(w)\prod_{i=1}^{n}|w_{i}|^{2\gamma_{i}},\\
\pi^{*}\exp(\phi_{2}) &=U_{2}(w) \prod_{i=1}^{n} |w_{i}|^{2\beta_{i}}
\end{aligned}
\end{equation}
where $U_{F}(w)$ and $U_{2}(w)$ are smooth, positive functions on $\overline{Z}$, and $\beta_i, \gamma_i$ are nonnegative real numbers.
 Our goal is to find $0<\nu<1$ such that
\begin{equation}\label{descr}
\pi^{-1}(E_\nu) \cap Z = \left\{w\in Z \bigg| U_{2}(w) \prod_{i=1}^{n} |w_{i}|^{2\beta_{i}}
\leq U_{F}^{\nu}(w)\prod_{i=1}^{n}|w_{i}|^{2\nu\gamma_{i}}\right\}
\end{equation}
 contains a neighborhood of $0 \in \mathbb{C}^{n}$.  First, we prove a lemma.

 \begin{lem}\label{crux}
 In equation~\eqref{res form}, if $\gamma_{i}>0$ for some $1\leq i\leq n$, then $\beta_{i}>0$.
 \end{lem}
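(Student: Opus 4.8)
The plan is to translate \eqref{res form} into statements about the supports of the pulled-back polar divisors. Since $F=\delta\psi$ has analytic singularities with polar set exactly $V$, the zero set of the function $\pi^*\exp(F)$ is precisely $\pi^{-1}(V)$; comparing with \eqref{res form} this means that $\gamma_i>0$ if and only if the coordinate hyperplane $\{w_i=0\}$ is an irreducible component of $\pi^{-1}(V)$ through $q$. Because $\pi^{-1}(G)=\ti{V}+\sum_\ell D_\ell$ is simple normal crossings and contains $\pi^{-1}(V)$, the germ of $\{w_i=0\}$ at $q$ coincides with one of these divisors, and since $Z$ is disjoint from $\ti{V}$ it equals some $D_{\ell_0}$ with $\pi(D_{\ell_0})\subseteq V$. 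On the other hand $\phi_2=\ti{\vp}_2$ has analytic singularities near $p$ with $\{\ti f_i=0,\ \forall i\}$ equal to its polar set, and since $\phi_2|_V=\vp$ this polar set meets $V$ exactly in $\{\vp=-\infty\}$, which contains $P$. Hence it suffices to prove that $\pi(D_{\ell_0})\subseteq P$: if so, every $\pi^*\ti f_i$ vanishes identically along $D_{\ell_0}$, so $D_{\ell_0}$ occurs with positive coefficient in the zero divisor of $\pi^*\exp(\phi_2)=\bigl(\sum_i|\pi^*\ti f_i|^2\bigr)^{\delta_1}e^{\pi^*\sigma_2}$, i.e.\ $\beta_i\geq\delta_1>0$ in \eqref{res form}.

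It remains to show $\pi(D_{\ell_0})\subseteq P$, which is the heart of the matter. Suppose not, and pick $v\in\pi(D_{\ell_0})\setminus P$. Then $\vp$ is finite near $v$, so $v$ lies outside $E_+(\omega+\ddb\vp_1)$ and $E_+(\omega+\ddb\vp_2)$: these are the polar sets of $\vp_1,\vp_2$, which near $v$ are empty since $\vp_1(z,z')=\vp_2(z,z')=\vp(z')+A|z|^2$ is finite there. Hence $G=V$ in a neighborhood of $v$. Now we invoke the normalization that the log resolution was obtained by first blowing up $V$ and then resolving the strict transform of $G$: over a neighborhood of $v$ the strict transform of $G$ is empty after the first blowup, so no further center meets $\pi^{-1}$ of that neighborhood and $\pi$ there is simply the blowup along $V$. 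Consequently the only component of $\pi^{-1}(V)$ meeting the fiber $\pi^{-1}(v)$ is $\ti{V}$. But $D_{\ell_0}$ is an irreducible component of $\pi^{-1}(V)$ meeting $\pi^{-1}(v)$, forcing $D_{\ell_0}=\ti{V}$, a contradiction. This proves $\pi(D_{\ell_0})\subseteq P$, and with it the lemma.

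The one delicate point is the geometric claim used above: over the locus $V\setminus P$, where $\vp$ is finite and $G$ is locally just $V$, the resolution $\pi$ introduces no exceptional divisor other than $\ti{V}$. This is exactly what "first blow up $V$, then resolve the strict transform of $G$" is designed to give, so I expect the argument to go through cleanly; everything else is routine bookkeeping with analytic singularities and the monomialization \eqref{res form}.
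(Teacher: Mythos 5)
Your proof is correct, but it takes a genuinely different route from the paper's. The paper argues by contradiction at the fixed point $q$: if $\gamma_i>0$ and $\beta_i=0$ for $D_{\ell_i}=\{w_i=0\}$, then $\pi(D_{\ell_i})\subset V$ and also $\alpha_i=0$, so the identity $\vp_1|_V=\vp_2|_V$ forces $\pi^*\exp(\vp_1)=\pi^*\exp(\vp_2)$ along $D_{\ell_i}$; hence a neighborhood of $D_{\ell_i}$ lies in $\pi^{-1}(\mathcal{S})$, contradicting the standing hypothesis $q\in\overline{\pi^{-1}(\mathcal{S}^{c})}$. You instead never use that hypothesis and prove the stronger assertion that every divisor $D_{\ell}\neq\ti{V}$ with $\pi(D_{\ell})\subset V$ in fact satisfies $\pi(D_{\ell})\subset P$, from which $\beta_i>0$ follows since $P$ lies in the polar set of $\vp_2$. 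The trade-off is that your argument leans on the normalization of the resolution (first blow up $V$, then resolve the strict transform of $G$), which guarantees that over $V\setminus P$, where $G$ coincides with $V$, the map $\pi$ is just the blowup along $V$ and the only divisor there is $\ti{V}$. The statement you prove would be false for a ``wasteful'' log resolution that blows up an extra center lying over a point of $V\setminus P$: the resulting divisor would have $\gamma>0$ but $\beta=0$. The paper's argument is insensitive to this, working for an arbitrary log resolution precisely because it exploits the restriction to $q\in\overline{\pi^{-1}(\mathcal{S}^{c})}$ together with the matching condition $\vp_1|_V=\vp_2|_V$. Within the paper's stated setup both proofs are valid; yours is more geometric and yields a cleaner global picture of the exceptional divisors, while the paper's is more robust to the choice of resolution. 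You rightly flagged the reliance on the resolution's structure as the delicate point, and it is covered by the paper's explicit normalization.
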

 \begin{proof}
Suppose for some $i$ we have $\gamma_{i} >0$ but $\beta_{i}=0$.
Let $\{w_{i}=0\} = D_{\ell_{i}}$. Then $\beta_i=0$ means that $\pi^{*}\exp(\phi_{2}) \not\equiv 0$ on $D_{\ell_{i}}$,
while $\gamma_i>0$ means that $\pi^{*}\exp(F)\equiv 0$ on $D_{\ell_i}$.
We have that $\pi(D_{\ell_{i}}) \subset V\cap U$, since $F$ is smooth outside $V$.
 However, we can also expand
 \begin{equation*}
 \pi^{*}\exp(\phi_{1}) =U_{1}(w) \prod_{j=1}^{n} |w_{j}|^{2\alpha_{j}},
 \end{equation*}
 and the fact that $\vp_1\not\equiv-\infty$ on $V\cap U$ again implies that $\alpha_i=0$.
 Since $\phi_{1}|_{V} =\phi_{2}|_{V}$, and $\pi(D_{\ell_{i}})\subset V$, we see again that
 \begin{equation*}
 \pi^{*}\exp(\phi_{1}) = \pi^{*}\exp(\phi_{2})\quad \text{ on } D_{\ell_{i}}.
 \end{equation*}
Recall that $\mathcal{S}=\{\vp_2>\vp_1-\eta\}\cap U$. This shows that $\pi(D_{\ell_i})$ is contained in the interior of $\mathcal{S}$,
which is impossible because we assumed $q\in \overline{\pi^{-1}(\mathcal{S}^{c})}$.
 \end{proof}

With these results in place, we can easily complete the proof of Lemma \ref{neigh}.
\begin{proof}[Proof of Lemma \ref{neigh}]
Thanks to Lemma \ref{crux}, we can choose a small $\nu_{0}>0$ so that $\beta_{i} > \nu_{0} \gamma_{i}$ for each $i$ such that $\gamma_{i}>0$.
 It follows from the description in \eqref{descr} that for any $0<\nu\leq\nu_0$, the set $\pi^{-1}(E_\nu) \cap Z$ contains an open neighborhood of
 the point $q\in D_{\ell} \cap \overline{\pi^{-1}(\mathcal{S}^{c})}$.
 Repeating this finitely many times
   on a covering of $(\sum_{\ell}D_{\ell}) \cap \pi^{-1}(\mathcal{S}^{c})$, and using Lemma \ref{lem: S nbhd},
    we find $\nu_{0}>0$ such that for any $0<\nu\leq\nu_0$, the set
   $\pi^{-1}( E_\nu\cup \mathcal{S} )$  contains an open neighborhood of  $\tilde{V}+ \sum_{\ell}D_{\ell}$. This immediately implies that $E_\nu \cup \mathcal{S}$ contains an open neighborhood of $p$, since $\pi$ is an isomorphism away
    from $\tilde{V}+ \sum_{\ell}D_{\ell}$.
\end{proof}
Furthermore, Lemma \ref{neigh} holds with $\nu_0$ independent of the value of $\eta>0$, which we are then free to decrease later on.
We pick $\nu\leq \nu_0$ so that
 \begin{equation*}
 \omega+ \ddb F \geq \nu \omega,
 \end{equation*}
holds as currents on $X$.
Since $\omega$ is a K\"ahler metric, we have
 \begin{equation*}
 \omega + \ddb \left(\nu F \right) \geq \nu^{2} \omega.
 \end{equation*}
Recall that $E_\nu=\{\nu F\geq \ti{\vp}_2\}\cap U$ and $\mathcal{S}=\{\ti{\vp}_2>\ti{\vp}_1\}\cap U$.
It follows that on $\mathcal{S}$ we have $\ti{\vp}_2>-\infty$, and so
if $U'\subset U$ is a slightly smaller open neighborhood of $p$, then
on $(\de E_\nu)\cap\mathcal{S}\cap U'$ we have $F>-\infty$ and $\nu F=\ti{\vp}_2$. Note that
$$(\de(\mathcal{S}\backslash E_\nu))\cap U'\subset ((\de E_\nu)\cap\mathcal{S}\cap U')\cup(P\cap U'),$$
so $\nu F\geq\vp'$ on $(\de(\mathcal{S}\backslash E_\nu))\cap U'$. This implies that the function
\begin{equation}
   \phi_\nu = \left\{
     \begin{array}{ll}
    \nu F , &  \text{ on }E_\nu\\
       \max\{\phi', \nu F \}, &  \text{ on } \mathcal{S}\backslash E_\nu,
     \end{array}
   \right.
\end{equation}
is defined in a neighborhood $U_p$ of $p$ and satisfies $\omega+\ddb \phi_\nu\geq \nu^2\omega$ (and the value of $\nu$ does not change if we decrease $\eta$, since the
cutoff functions $\theta_1$ and $\theta_2$ are constant on $U$).
Furthermore, since $F$ goes to $-\infty$ on $V$ while $\phi'$ is finite on $(V\backslash P)\cap W'$, we have that
$\phi_\nu$ equals $\phi'=\phi$ on $U_p\cap(V\backslash P)$.

Repeating this argument at every point $p\in M_2\cap P$, as well as every point $p\in M_1\cap P$, taking a finite covering given by the resulting open sets $U_p$,
and taking the smallest $\nu$ of the resulting ones, we conclude that there exists $\nu>0$ sufficiently small such that
$\phi_\nu$ is defined in a whole neighborhood $W_\nu$ of $K$ in $X$, and satisfies the same properties. This completes the first step.

We then fix slightly smaller open sets $W'_\nu\Subset U_\nu\Subset W_\nu$ such that $\cup_{j\geq 3}W_j'\cup W'_\nu$ still covers $V$.
We replace $W_1$ and $W_2$ with $W_\nu$, and replace $\vp_1$ and $\vp_2$ with $\vp_\nu$, and repeat the same procedure with two other open sets in this new covering.
The only difference is that while the functions $\vp_1, \vp_2$ have analytic singularities, this is not the case for $\vp_\nu$, which is instead locally given as the maximum of finitely many functions with analytic singularities. We now explain what modifications are needed in the arguments above.

At any subsequent step, we will have two open sets $W_a, W_b$ with $W_a'\Subset U_a\Subset W_a$ and $W_b'\Subset U_b\Subset W_b$
and with $W'_a\cap W'_b\cap V$ nonempty. On $W_a$ we have a function
$\vp_a$ with $\omega+\ddb\vp_a\geq \ve\omega$, with $\vp_a=\vp$ on $W_a\cap V$, and
similarly for $W_b$. Then exactly as before we obtain a function $\vp'$ on a neighborhood of $K\backslash ((M_a\cap P)\cup(M_b\cap P))$,
which is equal to $\vp'=\max\{\ti{\vp}_a,\ti{\vp}_b\}$ on $W_a\cap W_b$, where we picked cutoff functions $\theta_a, \theta_b$ as before and defined $\ti{\vp}_a=\vp_a-\eta'\theta_a,\ti{\vp}_b=\vp_b-\eta'\theta_b$, where $\eta'>0$ is small enough so that $\omega+\ddb\ti{\vp}_a$ and $\omega+\ddb\ti{\vp}_b$ are larger than $\ve'\omega$ for some $\ve'>0$. Because of the construction we just did, near a point $x\in M_b\cap P$
we can write
$$\vp_a=\max\{\ti{\vp}_{i_1},\dots,\ti{\vp}_{i_p},\nu_a F+\rho_a\},$$
for some $p>0$, some $0<\nu_a<1$ and a continuous function $\rho_a$ (in general the maximum will contain several terms of the form $\nu_j F+\rho_j$, but since $F(x)=-\infty$, up to shrinking
the open set where we work on, only one of them contributes to the maximum, and $\rho_a=\max_j \rho_j$ is continuous). Here the functions $\ti{\vp}_{i_k}$ are defined in \eqref{tilda}, so they have analytic singularities, and so does $F$, and we can also assume that their values of the parameter $\eta$ are all equal and smaller than $\eta'/2$.
Similarly, we can write
$$\vp_b=\max\{\ti{\vp}_{j_1},\dots,\ti{\vp}_{j_q},\nu_b F+\rho_b\}.$$
We work again on a small coordinate neighborhood $U$ centered at $x$ where $\theta_a=1$ and $\theta_b=0$.
We proved earlier that $\vp'$ is defined at least on $\mathcal{S}=\{\ti{\vp}_b>\ti{\vp}_a\}\cap U$. For $0<\nu<1$ we let
$E_\nu=\{\nu F\geq \ti{\vp}_b\}\cap U$. If we can show that there exists $\nu$ such that $E_\nu\cup\mathcal{S}$ contains a neighborhood of $x$, then
we can complete this step exactly as before. For simplicity, we write
$$\hat{\vp}_a=\max\{\ti{\vp}_{i_1},\dots,\ti{\vp}_{i_p}\}, \quad \hat{\vp}_b=\max\{\ti{\vp}_{j_1},\dots,\ti{\vp}_{j_q}\}.$$

To prove this, we pick a log resolution $\pi:\ti{U}\to U$ of the ideal sheaf of
$$G=\left(V\cup \bigcup_{k}E_+(\omega+\ddb\ti{\vp}_{i_k})\cup \bigcup_{\ell}E_+(\omega+\ddb\ti{\vp}_{j_\ell})\right)\cap U,$$
with $\pi^{-1}(G)=\ti{V}+\sum_\ell D_\ell$ as before.
Now note that $\mathcal{S}$ contains the set
$\mathcal{A}\cap\mathcal{B},$ where
$$\mathcal{A}=\{\hat{\vp}_b>\hat{\vp}_a-\eta'\}\cap U,\quad \mathcal{B}=\{\hat{\vp}_b>\nu_a F+\rho_a-\eta'\}\cap U.$$
The set $\mathcal{A}$ equals
$$\left(\bigcup_{\ell=1}^q\bigcap_{k=1}^p \{\ti{\vp}_{j_\ell}>\ti{\vp}_{i_k}-\eta' \}\right)\cap  U.$$
Since $\ti{\vp}_{j_\ell}=\vp_{j_\ell}-\eta\theta_{j_\ell}$ with $0\leq\theta_{j_\ell}\leq 1$, and similarly for $\ti{\vp}_{i_k}$, we see that
$$\{\ti{\vp}_{j_\ell}>\ti{\vp}_{i_k}-\eta' \}\cap U\supset \{\vp_{j_\ell}>\vp_{i_k}-\eta'/2 \}\cap U.$$
Thanks to Lemma \ref{lem: S nbhd}, each of the sets $\pi^{-1}(\{\vp_{j_\ell}>\vp_{i_k}-\eta'/2 \}\cap U)$ contains a neighborhood of $\ti{V}$,
and therefore so does $\pi^{-1}(\mathcal{A})$.
On the other hand, we have that
$E_\nu=\{\nu F\geq \vp_b\}\cap U$ equals
$$\bigcap_{m=1}^q \{\nu F \geq \ti{\vp}_{j_m}\}\cap\{\nu F\geq \nu_b F+\rho_b\}\cap U.$$
If we choose $\nu$ small, then $\{\nu F\geq \nu_b F+\rho_b\}\cap U=U$. Lemma \ref{crux} together with the proof of Lemma \ref{neigh} shows that
there exists $\nu>0$ small such that each set $\pi^{-1}(\{\nu F \geq \ti{\vp}_{j_m}\}\cap U)$ contains a neighborhood of
$$\left(\sum_\ell D_\ell \right)\cap\ov{\pi^{-1}((\cap_{k=1}^p\{\ti{\vp}_{j_m}>
\ti{\vp}_{i_k}-\eta'\}\cap U)^c)}.$$
Therefore, $E_\nu$ contains a neighborhood of
$$\left(\sum_\ell D_\ell \right)\cap\ov{\pi^{-1}((\hat{\vp}_b>
\hat{\vp}_a-\eta'\}\cap U)^c)}=\left(\sum_\ell D_\ell\right) \cap\ov{\pi^{-1}(\mathcal{A}^c)}.$$
This means that $\pi^{-1}(\mathcal{A}\cup E_\nu)$ contains a whole neighborhood of $\ti{V}+\sum_\ell D_\ell$.
On the other hand, the set $E_\nu\cup\mathcal{B}$ equals
$$\left(\{\nu F\geq \hat{\vp}_b\}\cup\{\hat{\vp}_b>\nu_a F+\rho_a-\eta'\}\right)\cap U\supset(V\cup\{\nu F>\nu_a F+\rho_a-\eta'\})\cap U,$$
and if we pick $\nu$ small enough and possibly shrink $U$, then this set equals $U$. This finally proves that $\pi^{-1}(E_\nu\cup\mathcal{S})$ contains a neighborhood of
$\ti{V}+\sum_\ell D_\ell$, which implies that $E_\nu\cup\mathcal{S}$ contains a neighborhood of $x$, and this step is complete.

After at most $N$ such steps, we end up with an open neighborhood $W$ of $V$ in $X$ with a function $\vp''$ defined on $W$ which satisfies
$\omega+\ddb\vp''\geq\ve''\omega$ for some $\ve''>0$, which equals $\vp$ on $V$. Up to shrinking $W$, we may assume that $\vp''$ is defined in a neighborhood of $\de W$.

Now we have a K\"ahler current defined on $W$. On $\de W$ the function $F$ is smooth, so we can choose a large constant $A>0$ such that
$F>\vp''-A$ in a neighborhood of $\de W$.
Therefore we can finally define
\begin{displaymath}
   \Phi = \left\{
     \begin{array}{ll}
     \max\{ \phi'',  F +A\} &  \text{ on }W\\
       F +A, &  \text{ on } X\backslash W,
     \end{array}
   \right.
\end{displaymath}
which is defined on the whole of $X$, it satisfies $\omega+\ddb\Phi\geq \ve'\omega$ for some $\ve'>0$. Since $F$ goes to $-\infty$ on $V$, while $\vp''$ is continuous near the generic point of $V$,
it follows that $\Phi$ equals $\vp$ on $V.$
This completes the proof of Theorem \ref{extend}. \end{proof}

The following simple example illustrates some of the ideas used in the proof of Lemma \ref{neigh}.

 \begin{ex}
Consider $\mathbb{C}^{2}$ with coordinates $(x,y)$.  We take
 $V = \{y=0\} \subset \mathbb{C}^{2}$, and consider the plurisubharmonic function $\phi = \log(|x|^{2})$
 on $V$, and $F = \log(|y|^{2})$ on $\mathbb{C}^{2}$. Take local extensions
\begin{equation*}
\phi_{1} = \log(|x-y|^{2}),\qquad \phi_{2} = \log(|x+y|^{2}),
\end{equation*}
of $\vp$.
We resolve the singularities of $G=\{ (x-y)(x+y)y=0\}$ by the blowup of the origin
$\pi(r,s) = (r,rs)$.  In this case $\{r=0\} = \pi^{-1}(0,0)$, and $\pi^{-1}(G)=\tilde{V}+\sum_{\ell=1}^3 D_\ell$ with
$$\tilde{V} = \{s=0\}, D_1=\{r=0\}, D_2=\{s=1\}, D_3=\{s=-1\}.$$
Then
\begin{equation*}
\pi^{*}(|y|^{2}) = |r|^{2}|s|^{2},\qquad \pi^{*}(|x-y|^{2}) = |r|^{2}|1-s|^{2},
 \qquad \pi^{*}(|x+y|^{2}) = |r|^{2}|1+s|^{2}.
\end{equation*}
In this case, we have
\begin{equation*}
\pi^{-1}(\mathcal{S}) = \{|r|^{2}|1-s|^{2} <e^{\eta}|r|^{2}|1+s|^{2} \} = \{|1-s|^{2} < e^{\eta}|1+s|^{2}\}.
\end{equation*}
Clearly $\pi^{-1}(\mathcal{S})$ contains an open neighborhood of $\tilde{V}\cup \{s=1\}$.
Thus, it suffices to show that the set
\begin{equation*}
\pi^{-1}(E_\nu) = \left\{|r|^{2}|1+s|^{2} \leq |r|^{2\nu}|s|^{2\nu} \right\}
\end{equation*}
contains an open neighborhood of $(\{r=0\} \cup \{s=-1\}) \cap \{|s|>\epsilon\}$.  This is clear,
for any $0<\nu <1$.
\end{ex}

\end{document}